\documentclass[12pt]{article}
\usepackage{times}
\ifdefined\usebigfont

\usepackage[fontsize=13pt]{scrextend}
\usepackage[left=1.56in,right=1.56in,top=1.74in,bottom=1.74in]{geometry}
\usepackage{natbib}
\pagenumbering{gobble}
\else
\fi

\RequirePackage{amsthm,amsmath,amsmath,amsfonts,amssymb}

\makeatletter
\newtheorem*{rep@theorem}{\rep@title}
\newcommand{\newreptheorem}[2]{%
	\newenvironment{rep#1}[1]{%
		\def\rep@title{#2 \ref{##1}}%
		\begin{rep@theorem}}%
		{\end{rep@theorem}}}
\makeatother

\usepackage{mathabx}





\usepackage{algorithmic}
\usepackage[ruled,noend]{algorithm2e}
\usepackage[skins]{tcolorbox}
\newcommand{\sign}{\mathrm{sign}}

\usepackage{color}

\usepackage{bm}
\usepackage{hyperref}
\hypersetup{colorlinks,
	linkcolor=blue,
	citecolor=blue,
	urlcolor=magenta,
	linktocpage,
	plainpages=false}

\usepackage{enumerate}


\usepackage{natbib}
\usepackage{amsmath}
\usepackage{amsthm}
\usepackage{amssymb}
\usepackage{mathabx}
\usepackage{tikz}
\usepackage{xcolor}
\usetikzlibrary{arrows}

\usepackage[title]{appendix}

\usepackage{hyperref}
\usepackage{bm}

\allowdisplaybreaks[4]

\usepackage{caption}
\usepackage{subcaption}





\newcommand{\conv}{\mathrm{conv}}

\newcommand{\reals}{\mathbb{R}}

\def\R{\mathbb{R}}

\newcommand{\norm}[1]{\left\|#1\right\|}

\newcommand{\relu}[1]{\textrm{ReLU}\left(#1\right)}

\newtheorem{theorem}{Theorem}[section]
\newtheorem{lemma}{Lemma}[section]

\newtheorem{assumption}{Assumption}[section]
\newtheorem{definition}{Definition}[section]

\newtheorem{remark}{Remark}[section]
\newtheorem{example}{Example}[section]

\newreptheorem{theorem}{Theorem}
\newreptheorem{lemma}{Lemma}
\newreptheorem{corollary}{Corollary}

\newcommand{\jnote}[1]{{[\color{red}JL: #1]}}
\newcommand{\sk}[1]{{\color{magenta} SK: #1}}
\usepackage{color}
\usepackage{amssymb,amsthm}
\usepackage{amsmath}
\usepackage{enumerate}
\usepackage[shortlabels]{enumitem}
\usepackage{hyperref}
\usepackage{cite}
\usepackage{mathtools}

	\title{Provably Correct Automatic Subdifferentiation\\ for
          Qualified Programs}	
  	
	\author{  Sham Kakade
\footnote{University of Washington, Seattle, e-mail: \ sham@cs.washington.edu}
 \and Jason D. Lee
\footnote{University of Southern California, e-mail: \ jasonlee@marshall.usc.edu}
}

\begin{document}

\maketitle

\begin{abstract}
The \emph{Cheap Gradient Principle}~\citep{Griewank:2008:EDP:1455489} ---
the computational cost of computing the gradient of a scalar-valued function is nearly the same (often
within a factor of $5$)  as that of simply computing the 
function itself --- is of central importance in optimization; it
allows us to quickly obtain (high dimensional) gradients of scalar
loss functions which are subsequently used in black box gradient-based
optimization procedures. The current state of affairs is markedly
different with regards to computing subderivatives: widely used ML
libraries, including TensorFlow and PyTorch, do \emph{not} correctly
compute (generalized) subderivatives even on simple 
examples. This work considers the question: is there a \emph{Cheap
  Subgradient Principle}?  Our main result shows that, under certain
restrictions on our library of nonsmooth functions (standard in
nonlinear programming), provably correct generalized subderivatives
can be computed at a computational cost that is within a
(dimension-free) factor of $6$ of the cost of computing the scalar
function itself.   
\end{abstract}

\section{Introduction}

The widespread implementation of Automatic Differentiation (AD)
methods \citep{Baydin2015AutomaticDI} has had a transformative effect
on applied machine learning; these methods have eased the difficulty for
practitioners, across a range of disciplines, to learn sophisticated
machine learning models (including deep neural architectures
and richer inferential models).  The paradigm is: one simply writes a
program to compute the function of interest, say a scalar (loss) function
$f(x):\R^d \rightarrow \R$, and then a correctly implemented AD method
will return both $f(x)$ and all $d$ of its partial derivatives when
provided with $x$ as an input.  These partial derivatives are often
used in conjunction with some (stochastic) gradient-based optimization
approach.  

Underlying the effectiveness of this
general black-box approach
is the \emph{Cheap Gradient Principle} \citep{Griewank:2008:EDP:1455489}: the computational cost of computing the
vector of partial derivatives $(\partial f
/ \partial x_1, \partial f
/ \partial x_2, \ldots \partial f
/ \partial x_d)$ is often nearly the same as
that of simply computing the scalar function $f(x)$ itself.  In fact,
for all rational functions, the striking Baur-Strassen
theorem~\citep{Baur1983TCo,Griewank89onautomatic} shows
that this increase in computational complexity is a (dimension free) factor of $5$.

In many settings, our underlying function $f(x)$ is a nonsmooth function,
and we resort to subgradient methods.
This work considers the question:
is there a \emph{Cheap Subgradient Principle}?  Specifically, given a program that computes a (locally Lipschitz) function $f$ and given a
point $x$, can we automatically compute an element of the (Clarke)
subdifferential $\partial f(x)$ \citep{origin}, and can we do this at
a cost which is comparable to computing the function $f(x)$ itself?
Informally, the set $\partial f(x)$ is the convex hull of limits of
gradients at nearby differentiable points. It can be thought of as generalizing the gradient (for smooth functions) and the subgradient
(for convex functions).

Let us briefly consider how current approaches handle nonsmooth functions, which are available to the user as functions in some library.
Consider the following three equivalent ways to write the identity
function, where $x\in \R$,
\[
f_1(x) = x, \quad f_2(x) = \relu{x}-\relu{-x}, \quad
f_3(x) = 10 f_1(x)-9f_2(x) \, ,
\]
where $\relu{x} = \max\{x,0\}$, and so
$f_1(x)=f_2(x)=f_3(x)$. As these functions are differentiable at $0$,
the unique derivative is $f'_1(0)=f'_2(0)=f'_3(0)=1$. However, both TensorFlow~\citep{tensorflow2015-whitepaper} and
PyTorch~\citep{paszke2017automatic},
claim that $f'_1(0)=1$, $f'_2(0)=0$, $f'_3(0)=10$. This particular
answer is due to using a subgradient of $0$ at $x=0$.  One may ask if a more
judicious choice fixes such issues; unfortunately, it is not difficult to
see that no such universal choice exists\footnote{By defining
  $\textrm{ReLU}^{\prime} (0) =1/2$, the reader may note we 
  obtain the correct derivative on $f_2, f_3$; however, consider $f_4
  (x) = \relu{\relu{x}}  - \relu{-x}$, which also equals
  $f_1(x)$. Here, we would need $\textrm{ReLU}^{\prime} (0) = \frac{\sqrt{5}-1}{2}$ to
  obtain the correct answer.}.  

This example should be concerning for a number of reasons.  The use of
nonsmooth functions in AD go well beyond simple one dimensional
nonsmooth functions (such as $\relu{\cdot}$ or the
$|\cdot |$); current methods permit utilizing eigenvalues, SVDs, QR
decompositions (there are AD procedures on these nonsmooth linear
algebra functions~\citep{autograd,DBLP:journals/corr/abs-1710-08717}).

\paragraph{Is correctness important?}

One option is to disregard these
issues --- which is the current
state of affairs --- based on the observation that in most cases
these issues are unlikely to harm our optimization method. In numerical linear algebra, one could make the same argument:
we never truly encounter degenerate linear systems (or degenerate
eigenspaces); nonetheless, in retrospect, 
numerical issues have made evident the importance of carefully addressing these ``corner cases''.  The situation may be analogous here:
numerical issues in these approaches can easily lead to 
unstable outputs. Note that some numerical instability is certainly to be expected due to
nonsmoothness (a point we return to in the Discussion under the
notion of \emph{mixed stability});
yet we would still hope to have 
nontrivial stability guarantees in our widely used AD  libraries,
much in the manner we have for our established numerical linear algebra libraries~\citep{tref-nla-1997,demmel}.

Ultimately, the importance of correctness in these methods is a
decision that must be made by the broader ML community.  Here, it is
worthwhile to consider that AD software has a range of
applications: from physical simulators to health care/social science
applications to deployed online learning systems to differentiable
programming.  For example, when using physical simulators (say in robotics or in
the sciences), a strong notion of stability may be critical when doing
AD through nonsmooth system dynamics. In safety-critical settings, we
may seek to have deployed online learning methods which are not
susceptible to errors due to misspecified input-output behavior in our
programs.  Perhaps the most compelling reason for provably correct
software implementations is to avoid costly failure modes due to the
utilization of the methods in novel and unforeseen manners.


\paragraph{Related Work:}
These issues are in fact known in the mathematical AD literature (see
\citet[Chapter 14]{Griewank:2008:EDP:1455489}).  Once we include
either nonsmooth primitive functions or permit branching in a program, the usual chain rule fails to hold and incorrect input-out
behavior is easy to observe. Due to that established calculus
properties of nonsmooth functions~\citep{kk_book,Mordukhovich} do not
seem amenable to AD approaches, the current provable methods
do not have general purpose,  computationally
efficient AD methods for subdifferentials.

One influential and powerful idea is that of lexicographic
differentiation~\citep{DBLP:journals/mp/Nesterov05a}; it is a property
of a subclass of nonsmooth functions which allow these function to
inherit a generalized notion of a chain rule.  This idea has been
utilized for obtaining correct generalized derivatives in
~\citet{khan_piecewise,Griewank_linearization}. The difficulty is that
lexicographic approach often is expensive in that it involves a
dimensional factor in the computational cost increase.

The other relatively few works that do focus on automatic generalized
differentiation go through some notion of algorithmic
linearization~\citep{dijon,DBLP:journals/mp/Nesterov05a,khan_piecewise,khan_vector,griewank_study},
where often piecewise smooth functions are considered, and the
approach attempts at correct AD through probing the pieces through
some linearization (see~\citet{GRIEWANK2014} for review). The
difficulties are due to understanding what information we can extract
through linear ``probes'' into the function.  

One of the first ideas along this line of thought is due
to~\citep{dijon}, which shows how to compute directional
derivatives of nonsmooth functions through following a ``branch'' the
program would take on an input (where the branch corresponds to the   
approach direction in the directional derivative). In fact, our work
uses this basic idea, as does the ``branch locking'' approach in
\citet{doi:10.1080/10556788.2017.1341506,Griewank_linearization}. The
difficulty in these approaches is in finding a means to relate this
linearization to properties of the (nonsmooth) functions, which will allow
the algorithm to succeed; naively, we can tell when a method might
have failed though it is difficult to guarantee if it will succeed.

As such, the extant body of work does not contain methods which
contain only a constant factor blow up in the computational cost.
Notable differences in this work is that our assumptions make
strong connections to nonlinear
programming~\citep{abadie,doi:10.1137/1015075,article}, which help in characterizing when the linearization approach is informative, and we
provide a key technical result showing a certain chain rule holds for
randomized algorithms.  Furthermore, our focus is on generalizing the
reverse mode for scalar functions (as opposed to focusing on
multivariate functions where there is no known Cheap Gradient Principle).

\paragraph{Our contributions:}
Our main result provides --- under a natural set of assumptions widely
used in nonlinear programming --- a provably correct
Automatic Subdifferentiation procedure, which given some $x$, computes \emph{both} the functional value $f(x)$ and a $d$ dimensional
subdifferential $(u_1,\ldots u_d) \in \partial f(x)$, with a computational cost that is a factor of at most $6$ times that of
computing the scalar function $f(x)$ itself.  Our assumption 
is that our library of functions be implemented in a manner consistent
with the standard \emph{constraint qualification} assumptions
in nonlinear programming~\citep{abadie}.
In short, this work shows that in fact there is a
\emph{Cheap Subgradient Principle}.


\begin{algorithm}[t]
  \caption{Straight Line Program for $f(x)$} 
  \label{alg:straight_program} 

  \begin{algorithmic}[1]
    
  \item[\textbf{Input:}] $x=(x_1, \ldots x_d)$
    \FOR{$k=d+1,d+2,\ldots T$}
    \STATE Compute:
    \[
    x_k = g_{k} (x_{\text{parents}(k)})
    \]
    where 
    $\text{parents}(k)$ is the index set of the ``parent'' variables of $k$.
    \ENDFOR
    \item[\textbf{Return:}] $x_T$.
\end{algorithmic}
\end{algorithm}

\begin{algorithm}[t]
  \caption{The Reverse Mode of AD} 
  \label{alg:ReverseMode} 
    \begin{algorithmic}[1]
  \item[\textbf{Input:}] variables $(x_1, \ldots x_T)$; a
    computational graph $\{\text{children}(t)\}_{t\in \{1,\ldots T\}}$; the
    associated derivatives 
    \STATE Initialize: $
    \frac{\partial x_T}{\partial x_T} =1$
    \FOR{$t= T,T-1, \ldots 1$}
    \STATE Compute:
    \[
    \frac{\partial x_T}{\partial x_t} = \sum_{i \in \text{children}(t)} \frac{\partial x_T }{\partial x_i} \frac{\partial x_i}{\partial x_t} 
    \]
 
    \ENDFOR
    \item[\textbf{Return:}] 
$\frac{\partial x_T}{\partial x}=\left(\frac{\partial x_T}{\partial x_1},\frac{\partial x_T}{\partial x_2},\ldots \frac{\partial x_T}{\partial x_d}\right) $.

  \end{algorithmic}
\end{algorithm}

\section{Preliminaries}

Assume $f\colon\R^d\to\R$ is a locally Lipschitz
function, and recall, that by Rademacher's theorem, this implies that $f$ is
  differentiable almost everywhere. The {\em Clarke subdifferential}
of $f$ at any point $x$ is the set \citep[Theorem
8.1]{clarke2008nonsmooth} 
\begin{align}
\partial f(x):=\conv\left\{\lim_{i\to\infty} \nabla f(x_i):
  x_i\xrightarrow[]{\Omega} x\right\} \label{eq:clarke-def},
\end{align}
where $\Omega$ is any full-measure subset of $\R^d$ such that $f$ is
differentiable at each of its points.  Here, the limit is taken to be the set of all limit points.
In classical circumstances, the subdifferential reduces to more
familiar objects. Namely, when $f$ is $C^1$-smooth at $x$, the
subdifferential $\partial f(x)$ consists only of the gradient
$\nabla f(x)$, while for convex functions, it reduces to the
subdifferential in the sense of convex analysis.

\subsection{AD Review  and The Baur-Strassen Theorem}

A \emph{straight line program} for computing $f(x): \R^d \rightarrow \R$ is specified
by a program of the form shown in
Algorithm~\ref{alg:straight_program}. Here the functions $g_1,g_2,\ldots$ are
assumed to be some function from a library of functions.  In the
algebraic circuit complexity model, these functions are either
monomials or affine functions of its inputs.  

More generally, we will be interested in utilizing a richer class of
functions where $g \in \mathcal{L}$, a  library of functions, e.g. we may desire functions like
the $|\cdot|$, $\relu{x}$, or ever richer nonsmooth functions
like eigenvalues.


Define $\mathrm{Runtime}(f;x)$ to be the time it takes to compute $f(x)$ under a given
program for $f$.

\begin{theorem}
\citep{Baur1983TCo,Griewank89onautomatic} Assume all multiplications and additions have unit runtime cost.
If we restrict to the algebraic circuit
complexity model (where the functions $g_k$ are either monomials or
affine functions), then it is possible to compute
both $f(x)$ and all its partial derivatives $\nabla f(x)$ in time that is at most
$5*\mathrm{Runtime}(f;x)$.
\end{theorem}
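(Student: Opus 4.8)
The plan is to establish the correctness and the cost of the reverse mode (Algorithm~\ref{alg:ReverseMode}) as two separate arguments. First I would reduce the program to a canonical form: since every $g_k$ is either affine or a monomial, each monomial $\prod_i x_i^{a_i}$ can be unrolled into a sequence of binary multiplications, and each affine map into a sequence of scalar multiplications and binary additions, at the cost of only a constant-factor increase in the program length that I will track. After this preprocessing the program in Algorithm~\ref{alg:straight_program} uses only the primitives $+$ and $\times$ (binary), multiplication by a constant, and addition of a constant, so that the per-node analysis below applies uniformly.

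For correctness, I would argue by reverse induction on $t$ from $T$ down to $1$ that the quantity $\bar{x}_t$ computed by Algorithm~\ref{alg:ReverseMode} equals the true partial $\frac{\partial x_T}{\partial x_t}$. Because each $g_k$ is smooth (a monomial or an affine function), $f = x_T$ is a smooth composition and the multivariate chain rule holds exactly: the base case $\bar{x}_T = 1$ is immediate, and the inductive step is precisely the recursion $\frac{\partial x_T}{\partial x_t} = \sum_{i \in \text{children}(t)} \frac{\partial x_T}{\partial x_i}\frac{\partial x_i}{\partial x_t}$, where each local factor $\frac{\partial x_i}{\partial x_t}$ is available in closed form (a constant $c$ for an affine node, and the sibling value $x_j$ for a multiplication $x_i = x_t x_j$). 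Running the reverse sweep a single time therefore returns the \emph{entire} gradient $\nabla f(x)$, which is the dimension-free feature distinguishing reverse mode from forward mode.

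For the cost, I would run a forward sweep that computes and stores all intermediate values $x_{d+1},\dots,x_T$ on a tape, followed by one backward sweep. The key accounting observation is that every forward operation induces only a bounded number of backward operations, all of which reuse stored values rather than recomputing them: a binary addition $x_k = x_i + x_j$ contributes the two accumulations $\bar{x}_i \leftarrow \bar{x}_i + \bar{x}_k$ and $\bar{x}_j \leftarrow \bar{x}_j + \bar{x}_k$; a binary multiplication $x_k = x_i x_j$ contributes $\bar{x}_i \leftarrow \bar{x}_i + \bar{x}_k x_j$ and $\bar{x}_j \leftarrow \bar{x}_j + \bar{x}_k x_i$, that is, two multiplications and two additions. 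Summing these contributions node by node and comparing against the forward count of $\mathrm{Runtime}(f;x)$ yields the claimed bound of $5\,\mathrm{Runtime}(f;x)$.

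The main obstacle I anticipate is the tight bookkeeping needed to pin the constant to $5$ rather than to ``some constant.'' This requires charging the backward multiply--accumulate steps against the corresponding forward operation in a way that respects the directed-acyclic-graph fan-out — a variable with several children contributes one summand per child, and these summands must be balanced against the forward cost of producing those children so that no operation is counted twice — together with verifying that the canonicalization of the first step does not inflate the ratio past $5$. Note that memory, rather than time, is the price paid for storing the forward tape, so it does not enter the runtime bound.
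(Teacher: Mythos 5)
Your proposal is correct and takes essentially the same route as the argument the paper relies on: the paper itself gives no proof of this theorem, deferring to \citet{Baur1983TCo,Griewank89onautomatic} and pointing to \citet{Morgenstern1985HowTC} for the counting, noting only that the algorithm is a forward evaluation followed by the reverse sweep of Algorithm~\ref{alg:ReverseMode}, which is precisely your tape-plus-backward-sweep scheme. Your per-node charging is the standard one and does pin the constant: a binary multiplication $x_k = x_i x_j$ costs one forward operation plus two multiplications and two additions backward (ratio $5$), an addition costs one plus two (ratio $3$), and since Assumption~\ref{assumption:cost} already measures a monomial's forward cost by its number of multiplications, your canonicalization into binary operations is cost-preserving rather than merely constant-factor, so the ratio of $5$ survives intact.
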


An algorithm achieving
this guarantee is to first compute $f(x)$ and then use the \emph{reverse
mode} of AD, in Algorithm~\ref{alg:ReverseMode}.
To see the specific counting argument, see
\citep{Morgenstern1985HowTC}.  This theorem is often more general: the
reverse mode also correctly returns the derivatives even with a richer
family of smooth
functions in our library $\mathcal{L}$, often with a constant factor cost increase as
well~\citep{Griewank89onautomatic}.  The reverse mode
itself has been rediscovered many times 
\citep{griewank_invtented}; the well known back-propagation
algorithm~\citep{Rumelhart:1986:LIR:104279.104293} is one 
example of the reverse mode of AD.   The reverse mode
(and the back-propagation algorithm) is not a
direct application of the chain rule; the direct application of the
chain rule is referred to as the \emph{forward mode} of AD (see
~\citet{Griewank:2008:EDP:1455489}), which is 
$d$ times more expensive procedure to compute the gradient.  The reverse mode can
be viewed as a form of dynamic programming. To compare the two, in
the reverse mode of AD, we compute the derivatives $\frac{\partial
  x_T}{\partial x_t}$, referred to as the adjoints\footnote{For a variable $x_T = g(x_{\text{parents}} )$, the notation $\frac{\partial x_{T}}{\partial x_t}$ refers to the derivative with respect to $x_t$, but holding all parent variables of $x_t$ as fixed. If $x_t$ is an input variable, then this is the usual partial derivative.}, while in the
forward mode of AD we would compute ($d$-dimensional) derivatives of the form $\frac{\partial
  x_t}{\partial x}$ (referred to as dual numbers).

\subsection{Nonsmooth functions and our computational model}

To specify how our nonsmooth functions are implemented, we
extend the computational model to allow for branching, using (a
restricted version\footnote{We avoid halting
  concerns by assuming our programs halt in a bounded amount of time. We also explicitly avoid discussing tapes and registers in
  our computational cost model.} of) the Blum-Shub-Smale model of
computation~\citep{DBLP:conf/focs/BlumSS88}.

\begin{definition}[Computation Model]
The computational model for computing any
$g(x): \R^d \rightarrow \R$ in our library ($d$
may be different for each function) is specified 
by a program of the form shown in Algorithm~\ref{alg:branch_program}. We
assume that the function $g_{k,z}$ is either a monomial or an affine function of its inputs.  
Furthermore, for every $g$, we assume that there exists
a time $T$, where the program terminates in at most this amount
of time.
\end{definition}

\begin{algorithm}[t]
  \caption{Program for a Nonsmooth function $g(x)$ } 
  \label{alg:branch_program} 

  \begin{algorithmic}[1]

  \item[\textbf{Input:}] $x=(x_1, \ldots x_d)$
    \STATE Initialize a vector $z$ to be all $-1$'s. $z$ is for
    notational convenience to keep track of the branch.
    \FOR{$k=d+1,d+2,\ldots T$}
    \STATE Compute:
    \[
    x_k = g_{k,z} (x_{\text{parents}(k,z)})
    \]
    \STATE If the program branches at $(k,z)$, then\\
    \begin{itemize}
    \item If: $x_k \ge 0$, $z_k=1$.
    \item  Else: $z_k=-1$.
    \end{itemize}
    \STATE  If the program halts at $(k,z)$, then terminate the \textbf{for} loop. 
    \ENDFOR
    \item[\textbf{Return:}] $x_k$.
\end{algorithmic}
\end{algorithm}

Throughout, we make the following assumption:

\begin{assumption}\label{assumption:cost}
(Computational Cost) Assume all multiplications and additions have unit runtime cost
  and that an execution of an ``If''
  statement is also unit cost.  For example, the cost of computing a
  monomial is the number of multiplications.
\end{assumption}

The program implicitly encodes a
function that has the following representation:
\begin{align}
f(x) = \sum_{z \in \{-1,1\}^T}  \mathbb{I}_{S_z} (x) p_z (x),
\label{eq:piecewise-def-f}
\end{align}
where each $p_z$ is a polynomial; $\mathbb{I}_{S_z}$ is 
the indicator function on the set $S_z$; and $S_z$ consists of
all $x$ where the program executes branch $z$ when given $x$ as
input. The set $S_z$ can be explicitly defined as follows: for 
steps $k$ where the programs branches on $z$, define
$h_{k,z}(x)=x_k$; on non-branching $k$, define $h_{k,z}(x)=-1$; define
the vector valued function $h_z(x)=(h_1(z),\ldots h_T(x))$; 
\begin{equation} \label{eq:Sz-def}
S_z := \{ x | \, \mathbb{I} (\sign(h_z(x))=z) \}
\end{equation}
where the $\sign(\cdot)$ is the usual sign function (applied componentwise) taking values in
$\{-1,1\}$ (where we take $\sign(0)=1$). Note that $S_z$ is
specified by a set of polynomial inequalities as defined by the
functions  $h_{k,z}(x)$.

\section{Provable Automatic Subdifferentiation}

\begin{figure}[t]

\begin{minipage}{.45\textwidth}
\begin{algorithm}[H]
  \caption{$\relu{x}$} 

  \begin{algorithmic}[1]
    
  \item[\textbf{Input:}] $x=x_1$
    \STATE Branch:
    \begin{itemize}
    \item If: $x_1 \ge 0$, set $x_2=x_1$.
    \item  Else:  set $x_2=0$.
    \end{itemize}
    \item[\textbf{Return:}] $x_2$.
\end{algorithmic}
\end{algorithm}
\end{minipage}
\begin{minipage}{.45\textwidth}
\begin{algorithm}[H]
  \caption{$\relu{x}$} 

  \begin{algorithmic}[1]
    
  \item[\textbf{Input:}] $x=x_1$
    \STATE Branch:
    \begin{itemize}
    \item If: $x_1^3 \ge 0$, set $x_2=x_1$.
    \item  Else:  set $x_2=0$.
    \end{itemize}
    \item[\textbf{Return:}] $x_2$.
\end{algorithmic}
\end{algorithm}
\end{minipage}

\caption{\label{fig:relu} Two programs that implement $\relu{x}$:
  Both programs are correct and return
  the same value. However, the program on the right violates
  Assumption \ref{assumption:cq} since the gradient of the constraint
  function at $x=0$, $\nabla (x_1 ^3) = 3x_1^2 =0$.  }
\end{figure}

In the algebraic circuit complexity model, where AD is provably
correct,  branching is not permitted. The inclusion of branching into
our program leads to a number of subtle issues.
Branching allows us to implement the same nonsmooth
function in different manners, which have important
consequences in linearization approaches. Consider two different
programs (with the same input-output behavior) for the $\relu{x}$
function in Figure~\ref{fig:relu}. The left program returns $x$ on the
constraint set that is encoded as $S_1=\{x| x\geq 0\}$, while the right
program returns $x$ on the constraint set that is encoded as $S_1=\{x|
x^3\geq 0\}$.  In nonlinear programming, the importance of avoiding
encoding constraints in the latter manner is well-known~\citep{abadie,doi:10.1137/1015075,article}.

This example motivates our 
restriction to only consider library
functions that are encoded like the former set.  
We will make the 
standard constraint
qualification assumption\footnote{The 
standard constraint
qualification assumption on a constraint set is that the tangent
cone of the constraint set equals the linearized cone (of the
functions which define the constraints).}. Roughly
speaking, the assumption states that first order information 
characterizes the set of feasible perturbations. We state this
assumption in a manner more directly applicable to our setting (see
\citep{abadie,doi:10.1137/1015075,article}).

\begin{assumption}
\label{assumption:cq}
(Constraint Qualification on our Library)
Assume for all $g\in
\mathcal{L}$ that $g$ is locally Lipschitz and our program for $g$ (in our computational model) satisfies the constraint
qualification condition on all sets $S_z$ in the following sense:
suppose $\{h_z\}$
(for binary $z$)  are the
corresponding constraint functions in our program. For
any $x,v$ (of the same input dimensionality of $g$), assume that for
all $z$:
\[
\lim_{\delta \downarrow 0} (\sign (h_z(x+\delta v)))
=
\lim_{\delta \downarrow 0} 
(\sign (h_z(x)+\delta\nabla h_z (x)\cdot v))
\, .
\]
Roughly, this states that the set approached along the limiting direction $x + \delta
v$, when $\delta \downarrow 0$, can be determined with first order information.
\end{assumption}


Before we state our main theorem, one more definition is in order, due to that
$\mathrm{Runtime}(f;x)$ may not be continuous. Define the
limiting runtime $\mathrm{Runtime}^*(f;x)$ of $f$ at $x$ as the (supremum) runtime
to compute $f(x)$, as $x$ is approached
from nearby points. Precisely,
\[
\mathrm{Runtime}^*(f;x):=\sup \left\{\lim_{i\to\infty} \mathrm{Runtime}(f;x_i):
  x_i\rightarrow x\right\},
\]
(where the limit is taken to be the set of all limit points).


\begin{theorem}\label{thm:main}
(A Cheap Subgradient
  Principle) Assume that our
program for
$f(x)$, in Algorithm~\ref{alg:straight_program}, is allowed to use nonsmooth functions from our library
$\mathcal{L}$ (in addition to affine functions and monomials).
Suppose assumptions~\ref{assumption:cost} and ~\ref{assumption:cq} hold. There
exists a (randomized) algorithm, which upon input $x$,
terminates in time that is at most
$6*\mathrm{Runtime}^*(f;x)$, and, almost surely, returns both $f(x)$
and an element $u \in \partial f(x)$.
\end{theorem}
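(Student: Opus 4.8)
The plan is to reduce the nonsmooth, branching program to a straight-line (branch-free) program by resolving every branch along a random approach direction, and then to invoke the Baur--Strassen reverse mode on the resulting straight-line program. Concretely, I would draw a single random direction $v \sim \mathcal{N}(0,I)$ and, in a forward pass, compute at every node $k$ both the value $x_k$ and the one-sided directional derivative $\dot x_k := \lim_{\delta \downarrow 0}\tfrac{1}{\delta}\big(x_k(x+\delta v)-x_k(x)\big)$. Whenever the program branches on a constraint $h_{k,z}$, I resolve the branch by $\sign(x_k)$, breaking the degenerate tie $x_k=0$ using $\sign(\dot x_k)$. By Assumption~\ref{assumption:cq} this reproduces exactly the branch that the true program takes along the curve $x+\delta v$ as $\delta\downarrow 0$, since the limiting sign pattern of the genuine constraints $h_z(x+\delta v)$ agrees with that of their linearizations $h_z(x)+\delta\nabla h_z(x)\cdot v$. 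Once this limiting branch $z^*$ is fixed, the program collapses to the single polynomial piece $p_{z^*}$, and I run the reverse mode on this branch-free computation to obtain $\nabla p_{z^*}(x)$.

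The central claim is then that $\nabla p_{z^*}(x)\in\partial f(x)$, which I would establish through the limit-of-gradients characterization in~\eqref{eq:clarke-def}. For a generic $v$ the approached region $S_{z^*}$ is full-dimensional near $x$, so there is a sequence of differentiability points $y_i\to x$ with $y_i\in \mathrm{int}\, S_{z^*}$; on this region $f\equiv p_{z^*}$, hence $\nabla f(y_i)=\nabla p_{z^*}(y_i)\to\nabla p_{z^*}(x)$, placing $\nabla p_{z^*}(x)$ among the limit points whose convex hull is $\partial f(x)$. Randomization enters precisely to guarantee this genericity: the bad directions, for which some tie-breaking derivative $\dot x_k$ vanishes while its constraint gradient does not, or for which we land on a lower-dimensional facet, lie in a finite union of measure-zero algebraic sets that a Gaussian $v$ avoids almost surely. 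Assumption~\ref{assumption:cq} is what forbids the pathological encoding $\nabla h_z(x)=0$ of Figure~\ref{fig:relu}: it forces each active constraint gradient to be nonzero, so that $\sign(\dot x_k)=\sign(\nabla h_z(x)\cdot v)$ is almost surely well defined and the tie-break is meaningful.

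The hard part, which I expect to be the main obstacle, is establishing correctness of this branch-following under composition --- a \emph{chain rule for randomized algorithms}. In isolation, Assumption~\ref{assumption:cq} governs a single library function $g$ probed along a straight ray, but inside the program the input to the call at node $k$ is itself a piecewise-polynomial function of $x$ traced along $x+\delta v$, not a straight ray. I must show that feeding the forward-mode pair $(x_k,\dot x_k)$ into the linearized sign test still selects the branch that the genuine curved input would select as $\delta\downarrow 0$. This requires propagating the constraint-qualification equivalence through the computational graph by induction on nodes: assuming the active pieces and their directional derivatives are computed correctly up to node $k$, the one-sided directional derivative of the input to $g_k$ equals $\nabla(\text{input})\cdot v$ along the already-resolved branch, and applying Assumption~\ref{assumption:cq} to $g_k$ with this effective direction yields the correct branch at node $k$. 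Closing this induction cleanly --- managing the interaction between the recursive tie-breaking and the measure-zero exceptional directions at all nodes simultaneously via a single union bound --- is where the technical care concentrates.

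Finally, the runtime bound follows from counting. Evaluating the value together with a single directional derivative $\dot x_k$ (forward mode seeded by the one direction $v$) to resolve all branches costs at most one extra function-evaluation's worth of work beyond computing $f$ along the limiting branch, and applying the reverse mode to the resulting straight-line program costs the Baur--Strassen factor of $5$; this is the intended $5+1=6$ accounting. The appearance of $\mathrm{Runtime}^*(f;x)$ rather than $\mathrm{Runtime}(f;x)$ is exactly because we follow the branch approached as $\delta\downarrow 0$, which may be strictly longer than the branch taken at $x$ itself; bounding the forward and reverse passes against this limiting runtime yields the stated $6\cdot\mathrm{Runtime}^*(f;x)$ guarantee.
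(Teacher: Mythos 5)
Your algorithm, your runtime accounting, and your identification of where the difficulty concentrates all match the paper: Algorithm~\ref{alg:ASD}'s tie-breaking by $\dot{x}_k$, one global reverse mode on the unraveled computational graph, and the $1+1+4=6$ count (value, forward-mode directional derivative, global reverse mode) against $\mathrm{Runtime}^*(f;x)$. The gap is in your proposed resolution of the hard part. Your induction rests on the claim that the linearized sign test at node $k$ selects the branch that the genuine curved input approaches, with the exceptional directions forming a measure-zero set avoided by a random $v$. Both halves fail. Assumption~\ref{assumption:cq} controls branch selection only along the straight ray $y+\delta d$ with $d=D[g;v](x)$; the true input to the node is $y+\delta d+o(\delta)$, and when $d$ is degenerate the $o(\delta)$ terms can place the curve on the opposite side of a constraint from the ray. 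Crucially, this degeneracy is \emph{not} generic in $v$ and no union bound saves you: for $f(x)=\relu{x^2}$ at $x=0$ (Example~\ref{example:relu_x_squared}) the effective direction is $\dot{x}=D[x^2;v](0)=0$ for \emph{every} $v$, so the map $v\mapsto \dot{x}_k$ can vanish identically. For $\relu{-x^2}$ the tie-break then selects the branch $x_2=x_1$ (so $u=1$) while the true input $-\delta^2v^2<0$ approaches the other branch; the output is nonetheless correct only because $\partial[g;v](0)=0$ annihilates the wrong $u$. The paper flags exactly this in Example~\ref{example:return}: the subroutine ``may not follow the same branch,'' yet the gradient is correct ``regardless of the branch taken.'' Your side claim that Assumption~\ref{assumption:cq} forces active constraint gradients to be nonzero is also not right (CQ can hold with vanishing gradient, e.g.\ an identically zero constraint), and in any case composition manufactures degenerate effective directions even from perfectly qualified library functions --- that is the content of Example~\ref{example:relu_x_squared}, which shows CQ does not compose.

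What the paper proves instead, and what your argument is missing, is deliberately weaker than branch correctness but sufficient: Theorem~\ref{thm:chain} shows that for almost all $v$ the product $\partial[g;v](x)^\top u$ equals the limiting gradient $\partial[f;v](x)$ \emph{even when $u$ is the gradient of the ``wrong'' piece}. The mechanism is Lemma~\ref{lem:piecewise-direction} (a first-nonvanishing-Taylor-tensor argument for the analytic constraint functions), applied three times --- to the composed $f$, to the vector function $g$, and to the pulled-back sets $\bar{S}_z$ of $h$ along the affine probe $\tilde{v}\mapsto g(x)+M\tilde{v}$ --- to show the limiting piece is constant over a full-dimensional neighborhood of directions. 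This yields $D[f;v'](x)=\partial[f;v](x)^\top v'$ and, via Shapiro's chain rule for directional derivatives, $D[f;v'](x)=u^\top M v'$ for all $v'$ in a ball; equating these two linear functionals on $d$ linearly independent directions gives the identity, which Lemma~\ref{lemma:library} then propagates through the whole graph. Your limit-of-gradients argument that $\nabla p_{z}(x)\in\partial f(x)$ for the piece actually approached is fine (it is essentially the paper's observation that $\partial[f;v](x)\in\partial f(x)$ for a.e.\ $v$), but without a substitute for the false branch-correctness induction, the proof collapses exactly at the composition step you correctly singled out as the crux.
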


The following example shows one subtle issue with regards to
constraint qualification.

\begin{example}\label{example:relu_x_squared}
(Constraint qualification on programs do not compose)  Consider the function
$f(x)=\relu{x^2}$ (which is equivalent to the smooth
  function $x^2$). It is straight forward to see that the induced program for
  $f(x)=\relu{x^2}$ (when we unravel it) does not satisfy the
  constraint qualification assumption, even if we do use an
  implementation of $\relu{\cdot}$ that does satisfy this
  assumption. Regardless, in Example~\ref{example:return}, we show
  that our algorithm does indeed correctly compute the gradient on
  this (continuous) function.
\end{example}

Before we present the construction, we first provide a chain rule for
nonsmooth functions.

\subsection{A Chain Rule for Nonsmooth Functions}

Let $D[g;v](x)$ denote the one-sided (Dini) directional derivative:
\[
D[g;v](x) := \lim_{\delta \downarrow 0} \frac{g(x+ \delta v) -g(x)}{\delta}.
\]
(note that we are not assuming that $v$ is a unit vector). This
derivative exists for all piecewise polynomials and semialgebraic functions~\citep[Lemma 6.2]{coste2000introduction}.

\begin{assumption}
\label{assumption:library}
(Overloading the library with ASD subroutines)
Assume we have a library of (locally Lipschitz) functions
$\mathcal{L}$ computable in our computational model.
For any $g \in \mathcal{L}$, with the
representation $g(x)=\sum_{z \in \{-1,1\}^T}  \mathbb{I}_{S_z} (x) p_z (x)$,
assume we have the following associated automatic
subdifferentiation subroutine $\text{ASD}[g]$ with the
following behavior: upon input $(x;v)$, the output $[a, d, u] =
\text{ASD}[g](x;v)$ satisfies
\[
a =g(x), \, 
d = D[g;v](x), \,
u = \nabla p_z(x)
\]
where $z$ is such that:
\[
\lim_{\delta \downarrow 0} (\mathbb{I}_{S_z}(x+\delta v))=1\, .
\] 
Roughly speaking, $u$ is the derivative determined by the set $S_z$ which is
approached along the limiting direction $x + \delta v$, when $\delta \downarrow 0$.
\end{assumption}

For any locally Lipschitz function $h$, define the limiting total derivate as:
\[
\partial[h;v](x) := \lim_{\delta \downarrow 0} \nabla h(x+ \delta v)
\]
if the limit exists. For almost all $v$,  the limit exists, and $\partial[h;v](x)$ is a
subdifferential of $h$.


\begin{theorem}\label{thm:chain}
(A Chain Rule for Nonsmooth Functions) 
Assume $h:\R^m \rightarrow \R$ and $g_1$, \ldots
$g_m$ (where $g_i: \R^d \rightarrow \R$)  are locally Lipschitz functions computable in our
computational model and that the function $h$ is overloaded with an ASD subroutine
as specified in Assumption~\ref{assumption:library}.
Define:
\[
f(x) := h(g_1(x), \ldots g_m(x)) =h(g(x))\, ,
\]
where $g(x)$ is the vector valued function $(g_1(x), \ldots
g_m(x))^\top$. 
Denote the $m\times 1$ vector of (one-sided) directional
derivatives as $D[g;v](x)$. If it exists, let $\partial[g;v](x)$ denote $m
\times d$ limiting Jacobian
matrix (whose rows are given by the vectors $\partial[g_i;v](x)$'s). Set:
\[
[a, d, u] = \text{ASD}[h] (g(x); D[g;v](x))
\]
For all but a measure $0$ set of $v$, we have that $\partial[f;v](x)$
and $\partial[g;v](x)$ exist and that:
\begin{align}
\partial[f;v](x) = \partial[g;v](x)^\top u \, .
\label{eq:chain-rule}
\end{align}
\end{theorem}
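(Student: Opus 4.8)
The plan is to reduce the nonsmooth composition to an ordinary (polynomial) chain rule along a generic ray, and then to show that the branch selected by $\text{ASD}[h]$ via its straight-line probe agrees with the branch the true image curve enters, \emph{up to the kernel} of $\partial[g;v](x)^\top$.

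First I would establish the smooth reduction. Each $g_i$ and the composite $f$ are piecewise polynomial with a representation as in \eqref{eq:piecewise-def-f}, whose pieces $S_z$ are cut out by polynomial inequalities. For all but a measure-zero set of $v$, the ray $\{x + \delta v : \delta \in (0,\epsilon)\}$ enters the interior of a single piece of each $g_i$ and of $f$ for small $\epsilon$: each constraint evaluated along the ray is a polynomial in $\delta$, hence has eventually constant sign, and since only finitely many constraints occur the bad directions $v$ lie in a finite union of hyperplanes. On such a ray $g_i(x + \delta v) = q_i(x + \delta v)$ for a fixed polynomial $q_i$, so $\partial[g_i;v](x) = \nabla q_i(x)$ exists and $D[g;v](x) = \partial[g;v](x)\,v$. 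Moreover $Q(\delta) := g(x + \delta v)$ is a genuine polynomial curve in $\R^m$, so by the same sign argument it enters a single branch $z^*$ of $h$; writing $h = p_{z^*}$ there gives $f(x+\delta v) = p_{z^*}(Q(\delta))$ for small $\delta$, and the classical chain rule for the polynomial $p_{z^*}\circ q$ yields
\[
\partial[f;v](x) = \partial[g;v](x)^\top \nabla p_{z^*}(g(x)).
\]

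It therefore remains to compare $z^*$ with the branch $z$ that $\text{ASD}[h]$ selects, namely the branch approached from $g(x)$ along the straight-line direction $w := D[g;v](x) = \partial[g;v](x)\,v$, for which $u = \nabla p_z(g(x))$. Since $Q(\delta) = g(x) + \delta w + O(\delta^2)$, each branching constraint of $h$ satisfies $h_k(Q(\delta)) = h_k(g(x)) + \delta\,\nabla h_k(g(x))\cdot w + O(\delta^2)$. Comparing this against the first-order probe and invoking Assumption~\ref{assumption:cq} for $h$ at $g(x)$ in direction $w$, I would show that $z$ and $z^*$ agree on every constraint $k$ except possibly the ``doubly degenerate'' ones, where both $h_k(g(x)) = 0$ and $\nabla h_k(g(x))\cdot w = 0$: when either term is nonzero it fixes the limiting sign for both the line and the curve, while constraint qualification pins the degenerate line to the $+1$ side. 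The role of genericity is that $\nabla h_k(g(x))\cdot w = \big(\partial[g;v](x)^\top \nabla h_k(g(x))\big)\cdot v$, so for almost every $v$ in the cone on which $\partial[g;v](x)$ is constant, the degenerate case forces $\partial[g;v](x)^\top \nabla h_k(g(x)) = 0$.

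Finally I would close the gap between $p_{z^*}$ and $p_z$ using continuity. The branches $z$ and $z^*$ differ only across the boundary variety cut out by the degenerate constraints, and local Lipschitzness (hence continuity) of $h$ forces $p_z = p_{z^*}$ on that boundary; consequently $\nabla(p_{z^*} - p_z)(g(x))$ lies in the span of the degenerate gradients $\nabla h_k(g(x))$, each of which is annihilated by $\partial[g;v](x)^\top$. Hence $\partial[g;v](x)^\top \nabla p_{z^*}(g(x)) = \partial[g;v](x)^\top \nabla p_z(g(x)) = \partial[g;v](x)^\top u$, which combined with the display above gives \eqref{eq:chain-rule}. I expect the main obstacle to be exactly this degenerate case---where the straight-line probe and the true image curve enter different branches of $h$---and the heart of the argument is showing that such disagreements always point in directions killed by $\partial[g;v](x)^\top$, the same mechanism that lets Example~\ref{example:relu_x_squared} succeed despite failing constraint qualification.
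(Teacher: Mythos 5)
Your smooth reduction is sound and is essentially the paper's Lemma~\ref{lem:piecewise-direction}: for almost every $v$ the limiting sign of each constraint along the ray is decided by its first nonvanishing Taylor coefficient, so the ray enters a single piece of $g$ and of $f$, giving $\partial[f;v](x)=\partial[g;v](x)^\top\nabla p_{z^*}(g(x))$ (a minor quibble: the bad directions form zero sets of nonzero homogeneous polynomials, i.e.\ measure-zero cones, not generally a finite union of hyperplanes). Your genericity observation that a doubly degenerate constraint forces $\partial[g;v](x)^\top\nabla h_k(g(x))=0$ for almost all $v$ is also fine. The proof breaks in the last paragraph, at the inference ``$p_z=p_{z^*}$ on the boundary, hence $\nabla(p_{z^*}-p_z)(g(x))$ lies in the span of the degenerate gradients $\nabla h_k(g(x))$.'' This is a non sequitur: a polynomial vanishing on a variety need not have its gradient in the span of the defining gradients without regularity. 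For instance $q(y)=y_1$ vanishes on $\{y\,\vert\,y_1^2+y_2^2= 0\}$ yet $\nabla q(0)=e_1$ while the defining gradient is $0$; and note that branching on $y_1^2+y_2^2\ge 0$ \emph{passes} Assumption~\ref{assumption:cq} (both limits are $+1$ at the origin), since the Abadie-type condition supplies no linear independence of active gradients. Two further problems compound this: continuity of $h$ only forces $p_z=p_{z^*}$ on $\overline{S_z}\cap\overline{S_{z^*}}$, which can be far smaller than the variety cut out by the degenerate constraints (possibly a single point, yielding only $p_z(g(x))=p_{z^*}(g(x))$); and because branching is adaptive, once the line probe and the true curve split at the first doubly degenerate constraint, the subsequent constraint functions $h_{k,z}$ and $h_{k,z^*}$ are different functions, so ``$z$ and $z^*$ agree except on doubly degenerate constraints'' is not even well-formed past the first disagreement. (Also, Theorem~\ref{thm:chain} is stated under Assumption~\ref{assumption:library} alone; constraint qualification enters only later, in Lemma~\ref{lemma:black_box}.)

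The paper sidesteps the branch comparison you attempt. It never relates $z$ to $z^*$; instead it works with one-sided directional derivatives as functions of the direction. Writing $M=\partial[g;v](x)$, it applies Lemma~\ref{lem:piecewise-direction} \emph{in direction space} to the pulled-back set $\bar{S}_z=\{\tilde v\,\vert\,\sign(p_j(g(x)+M\tilde v))=z_j \text{ for all } j\}$, concluding that for almost all $v$ there is a full-dimensional ball $\mathcal{B}$ around $v$ on which the line probe selects the \emph{same} branch $z$, so $D[h;Mv'](g(x))=\nabla p_z(g(x))^\top Mv'$ for all $v'\in\mathcal{B}$. Combining this with the chain rule for Dini directional derivatives of locally Lipschitz piecewise-polynomial functions~\citep{Shapiro1990}, $D[f;v'](x)=D[h;Mv'](g(x))$, and with the linearity $D[f;v'](x)=\partial[f;v](x)^\top v'$ on a neighborhood of $v$ (again by Lemma~\ref{lem:piecewise-direction}), it equates two linear functions of $v'$ on a full-dimensional ball and picks $d$ independent directions to get \eqref{eq:chain-rule}. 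The kernel identity you were aiming for, $\partial[g;v](x)^\top\bigl(\nabla p_{z^*}(g(x))-\nabla p_z(g(x))\bigr)=0$, is then a \emph{consequence} of the theorem rather than a lemma one can establish by the span-of-degenerate-gradients mechanism; to repair your argument you would essentially have to reproduce this directional-derivative step.
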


\begin{example}
Consider the example $x= f_2 (x) = \relu{x} - \relu{-x}$. We define $h
(y_1,y_2) =y_1- y_2$, $g_1 (x) = \relu{x}$, and $g_2(x) = \relu{-x}$,
so that $f_2 = h(g_1(x),g_2 (x))$. By applying the \text{ASD} subroutine
to $h$, starting at $x=0$ with $v=1$ which leads to running
$\text{ASD}[h] ((0,0);(1,0))=[a,d,u]$ (where it is straightforward to
verify that $u=[1, -1]^\top$), we obtain 
\begin{align*}
\partial[f_2;v] (0) &= \partial[g; v](0) ^T u\\
&= \begin{bmatrix} 
1 \\ 0 
\end{bmatrix} ^\top
\begin{bmatrix} 1 \\ -1 \end{bmatrix}\\
&=1,
\end{align*}
which is correct. Furthermore, note a correct answer is obtained for
any $v \neq 0$.
\end{example}


\begin{example} 
  We return to $f(x)=\relu{x^2}$ from
  Example~\ref{example:relu_x_squared}. Define $h(y)=\relu{y}$, $g(x) = x^2$, and so
  $f(x) = h(g(x))$. By applying the chain rule lemma at $x=0$ with $v=1$,
\[
\partial[f;v] (0) = \partial[g; v](0)  u = 0\cdot u = 0
\]
Subtly, note that $[a, d, u] = \text{ASD}[h] (0; 0)$, so we are
feeding a degenerate direction $d=0$ into our subroutine. Regardless, the
chain rule lemma still applies (for any $v$ in this case). 
\end{example}

\subsection{The algorithm}

We first present the algorithm that utilizes an overloaded library. We then provide a provably correct
construction of this overloaded library. All proofs are
provided in the appendix. 

\subsubsection*{Subdifferentiation with the overloaded library}

Algorithm~\ref{alg:asd-with-library} is the Automatic Subdifferentiation
procedure. Correctness follows from Lemma \ref{lemma:library}.

\begin{algorithm}[t]
  \caption{Automatic Subdifferentiation} 
  \label{alg:asd-with-library}   
  \begin{algorithmic}[1]
  \item[\textbf{Input:}] $x=(x_1, \ldots x_d)$, $v\in R^d$. \\
  \item[\textbf{Initialize:}]
    Set    $\dot{x}_1=v_1,\, \dot{x}_2=v_2, \ldots  \dot{x}_d=v_d$.
    \FOR{$k=d+1,d+2,\ldots T$}
    \STATE Compute $[a, d, u] = \text{ASD}[g_k]
    (x_{\text{parents}(k)};\dot{x}_{\text{parents}(k)})$ and set: \label{state:AD-state}
    \[
    x_k = a, \, 
    \dot{x}_k = d, \quad
    \frac{\partial x_k}{\partial x_{\text{parents}(k)}} = u
    \]
    \ENDFOR
    \STATE Compute $\frac{\partial x_T}{\partial x}$ using the Reverse Mode on these precomputed variables.
    \item[\textbf{Return:}] $x_T$, 
and $\frac{\partial x_T}{\partial x}$.
  \end{algorithmic}
\end{algorithm}

\begin{algorithm}[t]
  \caption{Overloading the function $g(x)$ } 
  \label{alg:ASD} 
  
  \begin{algorithmic}[1]
  \item[\textbf{Input:}] $x=(x_1, \ldots x_d)$, $v\in R^d$. \\
  \item[\textbf{Initialize:}] Set    $\dot{x}_1=v_1,\, \dot{x}_2=v_2, \ldots  \dot{x}_d=v_d$.
    \FOR{$k=d+1,d+2,\ldots T$}
    \STATE Compute $x_k$, its partial derivatives, and the directional derivative:
    \begin{align*}
    x_k &= g_{k,z} (x_{\text{parents}(k,z)})\, , \quad
    \left\{ \frac{\partial x_k}{\partial x_j} \middle \vert j \in \text{parents}(k,z) \right\}, \,
    \\
    \dot{x}_k &= \sum_{j \in \text{parents}(k,z)} \frac{\partial x_k}{\partial x_j} \dot{x}_j 
    \end{align*}
    \STATE If the program branches at $(k,z)$, then:
    \begin{itemize}
    \item If: $x_k > 0$, then $z_k=1$.
    \item Elseif: $x_k = 0$ \emph{and} $\dot{x}_k \geq 0$, then $z_k=1$.
    \item Else: $z_k=-1$
    \end{itemize}
    \STATE  If the program halts at $(k,z)$, then terminate the \textbf{for} loop. 
    \ENDFOR
    \STATE Compute $\frac{\partial x_k}{\partial x}$ using the Reverse Mode on these pre-computed variables.
    \item[\textbf{Return:}] $[a,d,u] = [x_k, \dot{x_k},\frac{\partial x_k}{\partial x}]$.
  \end{algorithmic}
\label{alg:asd-subroutine}
\end{algorithm}

\begin{lemma}\label{lemma:library}
Suppose
Assumptions \ref{assumption:cost} and \ref{assumption:library} hold. 
Upon input of an arbitrary $x$, and if $v$ is sampled uniformly at random from
the unit sphere, then, almost surely, Algorithm~\ref{alg:asd-with-library}
returns both $f(x)$ and an element $u \in \partial f(x)$.
\end{lemma}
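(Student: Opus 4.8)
The plan is to induct over the nodes of the straight-line program in topological order, propagating the nonsmooth chain rule of Theorem~\ref{thm:chain} through each node, and then to invoke the fact (recorded just before that theorem) that the limiting total derivative $\partial[f;v](x)=\lim_{\delta\downarrow 0}\nabla f(x+\delta v)$ is an element of the Clarke subdifferential $\partial f(x)$ for almost every $v$. Concretely, let $P_k$ denote the function $x\mapsto x_k$ that the program computes at node $k$, so that $P_T=f$. I would show that, for all $v$ outside a measure-zero exceptional set, the forward pass of Algorithm~\ref{alg:asd-with-library} computes at every node $k$ both the value $x_k=P_k(x)$ and the one-sided directional derivative $\dot x_k=D[P_k;v](x)$, and that the local gradient $u_k$ it stores satisfies the per-node chain-rule identity $\partial[P_k;v](x)=\partial[P_{\mathrm{parents}(k)};v](x)^\top u_k$.

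First I would establish the base case: for the input nodes $j\le d$ we have $P_j(x)=x_j$, $\dot x_j=v_j=D[P_j;v](x)$, and $\partial[P_j;v](x)=e_j$, which holds for every $v$. For the inductive step at a node $k>d$, I would apply Theorem~\ref{thm:chain} to the composition $P_k(x)=g_k\bigl(P_{\mathrm{parent}_1}(x),\ldots,P_{\mathrm{parent}_m}(x)\bigr)$, taking $h=g_k$ and $g_i=P_{\mathrm{parent}_i}$. The key point making the induction go through is that the direction the algorithm actually feeds into $\mathrm{ASD}[g_k]$, namely $\dot x_{\mathrm{parents}(k)}$, equals $D[g;v](x)$ by the induction hypothesis; hence the triple $[a,d,u_k]=\mathrm{ASD}[g_k](x_{\mathrm{parents}};\dot x_{\mathrm{parents}})$ coincides with the one in the theorem. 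Theorem~\ref{thm:chain} then gives, off a measure-zero set of directions $N_k$, both that $\dot x_k=d=D[P_k;v](x)$ and the desired chain-rule identity for $u_k$.

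Next I would assemble the pieces. Since Assumption~\ref{assumption:cost} forces the program to halt within a bounded time $T$, there are finitely many nodes, so the total exceptional set $\bigcup_{k} N_k$ (together with the single set on which $\partial[f;v](x)$ fails to be a subgradient) is a finite union of measure-zero sets and hence measure zero. For $v$ outside it, the reverse mode in Algorithm~\ref{alg:asd-with-library} accumulates exactly the stored local gradients $u_k$ over the DAG; because these are fixed vectors, reverse-mode accumulation computes the same linear map as forward accumulation of the identities $\partial[P_k;v](x)=\sum_{j\in\mathrm{parents}(k)}(u_k)_j\,\partial[P_j;v](x)$, namely $\partial[P_T;v](x)=\partial[f;v](x)$. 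Combined with $\partial[f;v](x)\in\partial f(x)$ and $x_T=f(x)$, this is the claim.

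The main obstacle I anticipate is handling the exceptional directions correctly. Two points need care: (i) that threading $\dot x_{\mathrm{parents}}$ as the input direction to each $\mathrm{ASD}[g_k]$ exactly reproduces the direction $D[g;v](x)$ of Theorem~\ref{thm:chain} at every node, which is what lets a single invocation of the chain rule cover each branch-selection step; and (ii) that, although Theorem~\ref{thm:chain} excludes a Lebesgue-measure-zero set of $v\in\R^d$, we sample $v$ uniformly on the sphere, so I must argue these exceptional sets are null for the spherical measure as well. The clean way to do this is to note that all the relevant quantities---the directional derivatives $D[\cdot;v](x)$ and the limiting branch choices $\lim_{\delta\downarrow0}\mathbb{I}_{S_z}(x+\delta v)$---are positively homogeneous in $v$, so each $N_k$ is a cone; a cone is Lebesgue-null in $\R^d$ if and only if its trace on the unit sphere is null for the uniform measure, and therefore $v$ avoids $\bigcup_k N_k$ almost surely.
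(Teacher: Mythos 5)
Your proposal is correct and takes essentially the same route as the paper's proof: express each intermediate variable as a piecewise-polynomial function of the input, use the chain rule for directional derivatives to show the forward pass computes $\dot{x}_k = D[P_k;v](x)$, apply Theorem~\ref{thm:chain} node by node to get the per-node limiting chain-rule identity $\partial[P_k;v](x)=\partial[P_{\mathrm{parents}(k)};v](x)^\top u_k$, observe that reverse mode only needs this chain rule, and conclude via $\partial[f;v](x)\in\partial f(x)$ for almost all $v$. Your explicit cone/positive-homogeneity argument showing the Lebesgue-null exceptional sets are also null for the uniform measure on the sphere is a correct refinement of a point the paper leaves implicit.
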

\begin{proof}
  Fix $k \in[d+1,\ldots, T]$. Every parent variable
  $j \in \text{parent}(k)$ can be expressed as
  $x_j = \tilde{g}_j (x) $, where $g_j$ is a piecewise polynomial on
  the $d$ dimensional input $x$. Thus
  \begin{align*}
    x_k = g_k ( \tilde{g}_1 (x),\ldots, \tilde{g}_{k-1} (x)).
  \end{align*}
  Now the usual chain rule holds for directional
  derivatives~\citep{Shapiro1990}. As the forward mode of AD implements
  the usual chain rule of directional derivatives, then we have
  $\dot{x}_{j}= D[\tilde g_j;v]$.
	 
	  By Assumption \ref{assumption:library} and Theorem \ref{thm:chain}, $\text{ASD}[g_k](x_{\text{parents}(k)}, \dot{x}_{\text{parents}(k)})$ returns $u= \frac{\partial x_k}{\partial x_{\text{parents}(k)}}= \partial [g_k ; \dot{x}_{\text{parents}(k)}]$ and this limiting total derivate satisfies the chain rule $\partial[x_k; v] (x) = \partial[\tilde g;v](x) ^\top u$. Since the limiting total derivates satisfies the chain rule and the validity of reverse mode AD algorithm relies only on the chain rule, Algorithm \ref{alg:asd-with-library}  correctly computes $\partial [f(x); v]$.
	  
By Rademacher's theorem and the definition of Clarke subgradient in Equation \eqref{eq:clarke-def}, $\partial [f(x);v] \in \partial f(x)$, for almost all $v$.
\end{proof}	 
	 
	
	

\subsubsection*{Overloading the Library Functions}

The following lemma shows that we can provide a method to correctly
overload the library, which we use in Algorithm~\ref{alg:asd-with-library}.

\begin{lemma}\label{lemma:black_box}
(Correct Library Overloading) Assume $g$ 
satisfies the constraint qualification
  conditions in Assumption~\ref{assumption:cq},
Suppose the corresponding representation is $g(x)=\sum_{z \in \{-1,1\}^T}  \mathbb{I}_{S_z} (x) p_z (x)$,
On an arbitrary input $x$ and $v$, Algorithm~\ref{alg:ASD} returns $g(x)$,  $D[g;v](x)$,
  and an element $u =\nabla p_z(x)$
where $z$ is such that:
$
\lim_{\delta \downarrow 0} (\mathbb{I}_{S_z}(x+\delta v))=1\, .
$
\end{lemma}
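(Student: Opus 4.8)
The plan is to identify the branch $z^*$ that the program follows along the ray $x+\delta v$ for all small $\delta>0$ --- i.e. the branch with $\lim_{\delta\downarrow 0}\mathbb{I}_{S_{z^*}}(x+\delta v)=1$ --- and to show that Algorithm~\ref{alg:ASD} (i) selects exactly this branch at every branching step, and (ii) once committed to $z^*$, correctly returns the value, directional derivative, and gradient of the polynomial $p_{z^*}$. The crucial observation is that the algorithm's tie-breaking rule implements the limiting sign $\lim_{\delta\downarrow 0}\sign(x_k+\delta\dot{x}_k)$: the case $x_k>0$ gives $z_k=1$, the case $x_k<0$ gives $z_k=-1$, and when $x_k=0$ the sign of $\delta\dot{x}_k$ (with the convention $\sign(0)=1$) decides the branch, which matches the algorithm's ``$\dot{x}_k\geq 0$'' condition exactly, including the degenerate tie $x_k=\dot{x}_k=0$.

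First I would establish, by induction over the branching steps $k$, that the algorithm follows $z^*$. Suppose the algorithm has agreed with $z^*$ on every branching step before $k$. Then the intermediate variables $x_j$ and the forward-mode derivatives $\dot{x}_j$ for the parents $j$ of $k$ coincide with the branch-$z^*$ values; since each $g_{k,z}$ is a monomial or affine map, the standard chain rule for one-sided directional derivatives (as used in the proof of Lemma~\ref{lemma:library}) gives $x_k=h_{k,z^*}(x)$ and $\dot{x}_k=\nabla h_{k,z^*}(x)\cdot v$, where $h_{k,z^*}$ is the polynomial branching function along $z^*$. The algorithm therefore sets
\[
z_k=\lim_{\delta\downarrow 0}\sign\!\big(h_{k,z^*}(x)+\delta\,\nabla h_{k,z^*}(x)\cdot v\big).
\]
By the constraint qualification of Assumption~\ref{assumption:cq} applied with $z=z^*$, this limit equals $\lim_{\delta\downarrow 0}\sign(h_{k,z^*}(x+\delta v))$, which is $z^*_k$ because $x+\delta v\in S_{z^*}$ for all small $\delta>0$. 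This closes the induction, and it also shows the program halts at the same step it would along $z^*$, since halting is a function of the branch. There is no circularity here, since $h_{k,z^*}$ depends only on the decisions $z^*_1,\ldots,z^*_{k-1}$, which are available by the inductive hypothesis.

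Having fixed the branch to $z^*$, the remaining computation is that of the smooth polynomial $p_{z^*}$, so step (ii) is routine AD. The returned value is $a=x_k=p_{z^*}(x)$; the returned directional derivative is $d=\dot{x}_k=\nabla p_{z^*}(x)\cdot v$ computed by forward mode; and $u=\partial x_k/\partial x=\nabla p_{z^*}(x)$ is computed by the reverse mode on the straight-line polynomial computation (valid by standard reverse-mode AD for smooth compositions, cf.\ the Baur--Strassen theorem). To match these to $g$ itself, note that $x+\delta v\in S_{z^*}$ for small $\delta>0$ gives $g(x+\delta v)=p_{z^*}(x+\delta v)$; continuity of $g$ (locally Lipschitz) and of $p_{z^*}$ yields $g(x)=p_{z^*}(x)=a$, and consequently
\[
D[g;v](x)=\lim_{\delta\downarrow 0}\frac{g(x+\delta v)-g(x)}{\delta}=\lim_{\delta\downarrow 0}\frac{p_{z^*}(x+\delta v)-p_{z^*}(x)}{\delta}=\nabla p_{z^*}(x)\cdot v=d.
\]
Finally $u=\nabla p_{z^*}(x)$ with $z^*$ satisfying $\lim_{\delta\downarrow 0}\mathbb{I}_{S_{z^*}}(x+\delta v)=1$, which is exactly the claimed output.

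The main obstacle is step (i), the agreement of the algorithm's branch with $z^*$: this is precisely where constraint qualification is indispensable, as it is what allows replacing the true nonlinear sign pattern $\sign(h_{k,z^*}(x+\delta v))$ by the first-order surrogate $\sign(h_{k,z^*}(x)+\delta\nabla h_{k,z^*}(x)\cdot v)$ that the forward-mode quantities $x_k,\dot{x}_k$ expose. The two delicate points inside this step are the degenerate tie $x_k=\dot{x}_k=0$ (handled uniformly by the $\sign(0)=1$ convention baked into both the branching rule and the definition of $S_z$) and confirming the induction is well-founded, i.e. that $h_{k,z^*}$ references only earlier branch decisions.
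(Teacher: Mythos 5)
Your proposal is correct and takes essentially the same route as the paper's (very terse) proof: constraint qualification forces the algorithm's branch selection to coincide with the limiting branch $z^*$ along $x+\delta v$, after which the computation is a fixed straight-line polynomial program on which forward mode gives $D[g;v](x)$ and reverse mode gives $\nabla p_{z^*}(x)$. Your induction over branching steps, the verification that the tie-breaking rule realizes $\lim_{\delta\downarrow 0}\sign(x_k+\delta\dot{x}_k)$ with the $\sign(0)=1$ convention, and the continuity argument giving $g(x)=p_{z^*}(x)$ simply make explicit the details the paper compresses into its one-line appeal to Assumption~\ref{assumption:cq}.
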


\begin{example}\label{example:return}
We again return to $\relu{x^2}$ from Example~\ref{example:relu_x_squared}. Here
we examine how  $h$ is overloaded based on the
 implementation in Algorithm \ref{alg:ASD}. When $(x,v)=(0,1)$, we are
 running $\text{ASD}[h] (0; 0)$ and this may not follow the same
 branch had we run on the (infinitesimal) input $x = \epsilon
        v$ which leads to running $h(\epsilon^2 v^2)$. However, the gradient is correctly computed, $\partial \relu{x^2} = 0$,
	regardless of the branch taken.	
\end{example}

\section{Discussion and Open Questions}


{\bf Overloading the Library Functions:} It is not difficult to see that piecewise univariate functions can be
implemented in our library. 

\begin{example}{Univariate Piecewise Polynomial (Algorithm~\ref{alg:univariate-poly}).}
	Let $\sigma: \reals \to \reals$ be a univariate piecewise polynomial, meaning that the domain $\reals$ is partitioned into a set of $k$ intervals  $(-\infty, b_1),(b_1,b_2),\ldots, (b_{k-1},\infty)$. On each interval, the function is equal to a polynomial $p_1,\ldots, p_k$.
	
	Algorithm \ref{alg:univariate-poly} provides a constraint qualified program for the function $\sigma(\cdot)$, which can be used as a library function.
	\end{example}
\begin{figure}[t]

		\begin{algorithm}[H]
				\label{alg:univariate-poly}
			\caption{$\sigma(x)$} 
			
			\begin{algorithmic}[1]
				
				\item[\textbf{Input:}] $x=x_1$
				\STATE Branch:
				\begin{itemize}
					\item If: $x_1 \le b_1$, set $x_2=p_1 (x)$.
					\item  Elseif: $x_1 \le b_2$, set $x_2 =p_2 (x)$.\\
				~	\vdots\\
					\item Elseif: $x_1 \le b_{k-1}$, set $x_2 =p_{k-1} (x)$.\\
					\item Else: set $x_2 = p_k (x)$.
				\end{itemize}
				\item[\textbf{Return:}] $x_2$.
			\end{algorithmic}
		\end{algorithm}

\end{figure}

An important step would be in extending our computational model to
allow the incorporation of provably correct automatic
subdifferentiation libraries for linear
algebra libraries.  AutoGrad~\citep{autograd} does do AD through linear
algebra methods though it can not be used to obtain correct
subdifferentials in programs (at
nondifferentiable points);
obtaining correct generalized derivatives may be particularly important in cases where we deal with low
rank methods. We conjecture our results can be extended, by extending
the computational model, to handle these cases
(there is already much known about the first order structure of
these methods~\citep{DBLP:journals/corr/abs-1710-08717}); technically,
SVDs are not exactly computable in either the algebraic circuit
complexity model or the Blum-Shub-Smale model.

{\bf Numerical Analysis:} The most important open question is how to
obtain numerically stable and accurate
solutions~\citep{tref-nla-1997,demmel}.  We conjecture the techniques
developed here will help in characterizing these issues. In
particular, the most natural question is how to develop algorithms that
satisfy the \emph{mixed stability} criterion: the algorithm should give
``nearly the right answer to nearly the right problem'' (as in
~\citep{tref-nla-1997}). For example, for the $\text{abs}(\cdot)$ function, it
should be acceptable for an AD method to provide a subgradient near to
$-1$ for a small input $\epsilon>0$ due to roundoff error; however, it would undesirable for 
numerical error to lead vastly different gradients than those that
arise from any nearby
problem. This may be particularly important when doing AD in
physical simulators. 


\paragraph{Acknowledgments:}  We thank Dima Drusvyatskiy for many
helpful discussions. Sham Kakade acknowledges funding from
Washington Research Foundation Fund for Innovation in Data-Intensive
Discovery, the NSF through award CCF-1740551, and ONR award N00014-18-1-2247.  Jason D. Lee acknowledges support of the ARO under MURI Award W911NF-11-1-0303.  This is part of the collaboration between US DOD, UK MOD and UK Engineering and Physical Research Council (EPSRC) under the Multidisciplinary University Research Initiative.

\bibliography{bibliography,ad_bib}
\bibliographystyle{plainnat}

\appendix
\section{Proofs}
\label{sec:proof}

First, we provide a helpful technical lemma.

\begin{lemma}\label{lem:piecewise-direction}
Let $z \in \{-1,1\}^T$,  and suppose the functions $p_1(x), \ldots p_T(x)$ are analytic
functions from $\reals^n\rightarrow \reals$.
  Let $S_z \subset
\reals^n$ be a set defined as follows:
$$
S_z=\{x \, \vert \,  \sign(p_1(x))=z_1, \ldots \sign(p_T(x))=z_T \} \, .
$$
Fix any $x\in \reals^n$. For all $z \in \{-1,1\}^T$ and for almost all $u$, there
is an open neighborhood $ B(u,r)$ such that for all $u' \in B(u,r)$ ,
\[
\lim_{\delta \downarrow 0} \mathbb{I}_{S_z} (x+\delta u)=\lim_{\delta \downarrow 0} \mathbb{I}_{S_z} (x+\delta u') \, .
\]
\end{lemma}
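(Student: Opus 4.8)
The plan is to reduce the statement to the elementary fact that the sign of a real-analytic function along a fixed ray is eventually constant and is governed by the leading term of its power-series expansion. First I would fix $x$ and, for each $i \in \{1,\ldots,T\}$, expand the univariate analytic function $\delta \mapsto p_i(x + \delta u)$ in a power series about $\delta = 0$:
\[
p_i(x + \delta u) = \sum_{k \ge 0} c^{(i)}_k(u)\, \delta^k, \qquad c^{(i)}_k(u) = \sum_{|\alpha| = k} \frac{D^\alpha p_i(x)}{\alpha!}\, u^\alpha,
\]
observing that each coefficient $c^{(i)}_k$ is a homogeneous polynomial of degree $k$ in the direction $u$. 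This is exactly where analyticity is used: it guarantees convergence of the series for small $\delta$ and lets the sign for small $\delta>0$ be read off from the first nonvanishing coefficient.

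Next, for each $i$ let $k_i^\star$ be the least $k$ for which $c^{(i)}_k \not\equiv 0$ as a polynomial in $u$. (If no such $k$ exists, then $p_i \equiv 0$ near $x$, hence $p_i \equiv 0$ by analyticity on the connected domain $\reals^n$, and the factor $\mathbb{I}[\sign(p_i) = z_i]$ is globally constant, so such $i$ may be set aside.) Define the exceptional set $E := \bigcup_i \{u : c^{(i)}_{k_i^\star}(u) = 0\}$, which is independent of $z$; as a finite union of zero sets of nontrivial polynomials it is closed and has Lebesgue measure zero. For $u \notin E$, every coefficient $c^{(i)}_k(u)$ with $k < k_i^\star$ vanishes identically while $c^{(i)}_{k_i^\star}(u) \neq 0$, so for all sufficiently small $\delta>0$,
\[
\sign\big(p_i(x + \delta u)\big) = \sign\big(c^{(i)}_{k_i^\star}(u)\big),
\]
a value independent of $\delta$. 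Consequently $\lim_{\delta \downarrow 0} \mathbb{I}_{S_z}(x + \delta u)$ exists and equals $\prod_{i} \mathbb{I}\big[\sign(c^{(i)}_{k_i^\star}(u)) = z_i\big]$.

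Finally I would establish local constancy in $u$. Fix $u \notin E$. Since each $c^{(i)}_{k_i^\star}$ is continuous and nonzero at $u$, and $E$ is closed, I can choose $r>0$ small enough that $B(u,r) \cap E = \emptyset$ and each map $\sign(c^{(i)}_{k_i^\star}(\cdot))$ is constant on $B(u,r)$. Then for every $u' \in B(u,r)$ the product expression above takes the same value as at $u$, which yields
\[
\lim_{\delta \downarrow 0} \mathbb{I}_{S_z}(x + \delta u) = \lim_{\delta \downarrow 0} \mathbb{I}_{S_z}(x + \delta u'),
\]
and since $E$ is null this holds for almost all $u$ (and for all $z$ simultaneously). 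The one step needing care — and the main obstacle — is the passage from ``sign of each analytic $p_i$ along the ray'' to ``sign of a single fixed polynomial $c^{(i)}_{k_i^\star}$ in $u$'': one must verify that the least index with a nonvanishing coefficient is the \emph{same} for all generic $u$, which holds precisely because the lower-order coefficients vanish identically as polynomials, so the limiting sign is controlled by one polynomial whose zero set is negligible.
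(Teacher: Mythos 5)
Your proposal is correct and follows essentially the same route as the paper's proof: Taylor-expand each $p_i$ along the ray $x+\delta u$, identify the first coefficient that is not identically zero as a polynomial in $u$ (the paper's $\nabla^{i_j}p_j(x)[u^{\otimes i_j}]$, your $c^{(i)}_{k_i^\star}$), note that its zero set is Lebesgue-null, and use continuity of that leading polynomial to get local constancy of the limiting signs. Your write-up is in fact slightly cleaner in two minor respects --- the case $p_i(x)\neq 0$ (the paper's ``inactive constraints'') is absorbed as $k_i^\star = 0$ rather than treated separately, and the exceptional set $E$ is exhibited explicitly as closed and independent of $z$ --- but these are refinements of the same argument, not a different one.
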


\begin{proof}
We need only consider one such $z$ for the proof, as the set of
possible $z$ is finite. Without loss of generality, we can say $S_z=S $ is a set of the form:
	$$
	S=\{p_1(x)\ge 0,\ldots, p_k(x) \ge 0, p_{k+1}(x) <0,\ldots,p_T(x) <0\} \, .
	$$
For the proof, it suffices to show that for all $j$ and almost all $u$, there
is an open neighborhood $ B(u,r)$ such that for all $u' \in B(u,r)$ ,
\[
\lim_{\delta \downarrow 0} \sign(p_j(x+\delta u)) =
\lim_{\delta \downarrow 0} \sign(p_j(x+\delta u'))
\, .
\]
Let us split the constraints into a set of active constraints $p_j(x)
=0$ for $j \in A$ and inactive constraints $p_j (x) \neq 0$ for $j \in I$.
For inactive constraints, the above holds due to continuity. It
remains to show the above for active constraints. Let us also assume that
the functions $p_j$ in the active set are not identically equal to $0$ (the claim
is true for any zero function).
	

For every active constraint $j \in A$ , testing $\lim_{\delta \downarrow 0}
\sign(p_j(x+\delta u) )= 1$ can be done by the Taylor expansion,
\begin{align*}
  p_j(x+\delta u) = p_j(x)+ \sum_{i=1}^\infty\frac{\delta^i \norm{u}^i}{i!}
  \nabla^i p_j(x) [u^{\otimes i}] \, , 
\end{align*}
which exists since $p_j$ is analytic.
For any $i$, note that $\nabla^i p_j(x) [u^{\otimes i}]$ is a
polynomial function in $u$. As it is a polynomial, this function will
either be equal to $0$ (for all $u$) or
it will be nonzero for almost all $u$\footnote{This can proven
  by induction on the dimension of $u$. }.
Let $i_j$ be the first index that the function
$\nabla^i p_j(x) [u^{\otimes i}]$ is nonzero for almost all $u$ ($i_j$
is finite as $p_j$ is not the zero function). Then for almost every direction $u$,
\[
  \lim_{\delta \downarrow 0} \sign (p_j (x+\delta u) ) =\sign\big(
  \nabla^{i_j} p_j(x) [ u^{\otimes i_j} ]\big). 
\]
For almost all $u$,
$\nabla ^{i_j} p_j(x) [u^{\otimes i_j}] \neq 0$. Since $\nabla ^{i_j}
p_j(x) [u^{\otimes i_j}]$ is strictly non-zero, then, 
by continuity,
\[
\sign (\nabla ^{i_j} p_j(x) [u^{\otimes i_j}])
=
\sign(\nabla^{i_j} p_j (x) [ u'^{\otimes i_j}])
\]
for $\norm{u'-u}<r$ (for sufficiently small $r$). This implies that for $\norm{u'-u}<r$
\[
\lim_{\delta \downarrow 0} \sign(p_j(x+\delta u)) =
\lim_{\delta \downarrow 0} \sign(p_j(x+\delta u'))
\, ,
\]
which completes the proof.
\end{proof}

\subsection{Proof of Theorem~\ref{thm:chain}}

\begin{proof}
Suppose that a program for $h(y)=h(y_1,\ldots, y_m)$ has the representation:
\begin{eqnarray*}\label{eq:h}
h(y) = \sum_{z \in \{-1,1\}^T}  \mathbb{I}_{S_z} (y) p_z (y),
\end{eqnarray*}
where $S_z$ is specified as in Equation~\ref{eq:piecewise-def-f}.
Since $f$ is the composition of two programs, it also has a
representation of the form:
\begin{eqnarray*}
f(x) = \sum_{z \in \{-1,1\}^{T'}}  \mathbb{I}_{\tilde{S}_z} (x) \tilde{p}_z (x),
\end{eqnarray*}
for different polynomials $\tilde{p}_z$ and sets $\tilde{S}_z$.

By Rademacher's theorem, $\partial [f;v]$ exists for almost all
$v$. Since $f$ itself is a piecewise polynomial, $\partial[f;v](x) =
\nabla \tilde{p}_z(x)$ where $z$ is such that $\lim_{\delta \downarrow 0}
\mathbb{I}_{\tilde{S}_z} (x+\delta v)=1$.
Using Lemma \ref{lem:piecewise-direction}, for all $z' \in
\{-1,1\}^{T'}$ and almost all $v$, 
there is a (full dimensional) neighborhood $\mathcal{B}$ around $v$,
in which $\lim_{\delta \downarrow 0} \mathbb{I}_{\tilde{S}_{z'}}
(x+\delta v')=1$, which implies $\partial[f;v'](x) = \nabla
\tilde{p}_{z}(x)$ (using our choice of $z$). Hence,  
\[
\partial[f;v](x) = \partial[f;v'](x) \, 
\]
for all $v' \in \mathcal{B}$.

Furthermore, using this and the definition of the directional
derivative, we have for all $v' \in \mathcal{B}$,
\begin{equation}\label{eq:dir_deriv_pbar}
D[f;v'](x)= \partial[f;v'](x) ^\top v' =\partial[f;v](x)^\top v'
\, .
\end{equation}
The remainder of the  proof seeks to show this directional derivative can be
written as: 
\[
D[f;v'](x)= u^\top \partial[g;v](x) \, v'
\]
(for $v'$ in a sufficiently
small full dimensional ball). Note this would imply that for all $v'$
in a full dimensional ball,  $u^\top \partial[g;v](x) \, v'
=\partial[f;v](x)^\top v' $, which would complete the proof by choosing $d$
linearly independent $v'$'s. 

Define $M=\partial[g;v](x)\in \mathbb{R}^{m \times d}$ be a matrix whose rows are $\partial[g_i;v](x)$.   Observe $D[g;v](x) =M v$. Now let us
show that:
\[
D[g;v'](x) =M v'
\]
for all $v' \in \mathcal{B}_1$, where $\mathcal{B}_1 $ is a
sufficiently small (full dimensional) neighborhood
around $v$.
To see this, note the function $g(x)$ itself is a piecewise (vector) valued
function, and, by Lemma \ref{lem:piecewise-direction}, for almost all
$v$, there is a neighborhood
$\mathcal{B}_1$ around $v$, in which for all $v' \in
\mathcal{B}_1$, $v'$ selects the same piece as $v$, which implies 
$\partial[g;v'](x)=M$ as claimed.

Now due to the assumed properties of $ASD$, we know
\[
u = \nabla p_{z}(y) \vert_{y=g(x)} \, .
\]
for some $z$ which satisfies:
\[
\lim_{\delta \downarrow 0} \mathbb{I}_{S_z} (g(x)+
\delta D[g;v](x) ) = 1 \, .
\]
Since $D[g;v](x) =M v$, we have:
\[
\lim_{\delta \downarrow 0} \mathbb{I}_{S_z} (g(x)+
\delta Mv ) = 1 \, .
\]
Also, note
\[
D[h;Mv](y) =\nabla  p_z(y)^\top Mv 
\]
by the definition of the directional derivative and where
$y=g(x)$.

Using Lemma
\ref{lem:piecewise-direction} again, we will show that
\begin{equation}  \label{eq:claim}
D[h;Mv'](y) =\nabla  p_z(y)^\top Mv' 
\end{equation}
for all $v' \in \mathcal{B}_2$ (for a sufficiently small ball
$\mathcal{B}_2$). Note that the above holds if:
\begin{align}
\lim_{\delta \downarrow 0} \mathbb{I}_{S_z} (g(x)+
\delta Mv' ) = 1 \, , \label{eq:Mv'-set}
\end{align}
for all $v' \in \mathcal{B}_2$.
Now consider the set
$\bar{S}_z$ defined by
\[
\bar{S}_z=\{\tilde v \, \vert \,  \sign(p_1(g(x)+M\tilde
v))=z_1, \ldots \sign(p_T(g(x)+M\tilde v))=z_T \}.
\]
Lemma \ref{lem:piecewise-direction} guarantees
that  there exists a  ball $\mathcal{B}_2$ (centered
on $v$) such that:
\[
\lim_{\delta \downarrow 0} \mathbb{I}_{\bar{S}_{z}} (
\delta v' ) = 1 \, ,
\]
for all $v' \in \mathcal{B}_2$, which is equivalent to Equation
\eqref{eq:Mv'-set}. Thus we have proven the Equation \eqref{eq:claim},
for all $v' \in \mathcal{B}_2$.

As the chain rule holds for directional derivatives (of locally Lipschitz piecewise polynomial/semialgebraic  functions)~\citep{Shapiro1990}, we have that:
\[
D[f;v'](x) =D[h;Mv'](y) =\nabla  p_z(y)^\top Mv' =u^\top \partial[g;v](x) v'
\]
for $v'$ that is in some sufficiently small full dimensional neighborhood
$\mathcal{B}_3$ (that is contained in $\mathcal{B}_2$ and $\mathcal{B}_1$).

Now define the full dimensional neighborhood $\mathcal{B}_4$ so that it is contained in
both $\mathcal{B}_3$ and $\mathcal{B}$. We have shown, for $v' \in \mathcal{B}_4$,
\[
u^\top \partial[g;v](x) \, v' =\partial[f;v](x)^\top  v' \, ,
\]
using Equation~\ref{eq:dir_deriv_pbar}. The proof is completed by choosing $d$ linearly independent $v'$'s, which implies that $u^\top \partial[g;v](x) =\partial[f;v](x)^\top$.
\end{proof}


\subsection{Proof of Lemma~\ref{lemma:black_box}}

\begin{proof}[Proof of Lemma~\ref{lemma:black_box}]
Let $h_{t,z} (x) $ be polynomials that represent  $g_{t,z} $ by expressing in terms of the input variables $x$ only. By the Constraint Qualification Assumption \ref{assumption:cq}, the branch taken by Algorithm \ref{alg:asd-subroutine} is the same branch that would be taken by running on the perturbed input $x+\delta v$ for all $\delta< \delta_0 (v)$.

To finish the proof, we simply note that the reverse mode is performed on a straight-line program that computes $\nabla p_z (x) $, where $z$ is such that $\lim_{\delta \downarrow 0} \mathbb{I}_{S_z} (x+\delta v) =1$. 
\end{proof}

\subsection{Completing the proof of Theorem~\ref{thm:main}}
Lemma \ref{lemma:black_box} shows that every library function $g$ that
satisfies Assumption \ref{assumption:cq} can be implemented via
Algorithm \ref{alg:asd-subroutine} to satisfy Assumption
\ref{assumption:library}. Then Lemma \ref{lemma:library} shows that we compute an element $\partial[f;v](x) \in \partial f(x)$.

The second part of the proof counts the number of unit operations. See~\citet{Morgenstern1985HowTC,Griewank89onautomatic} for the counting
argument for a factor of $5$, where the reverse
mode (after we have computed the function and stored the intermediate
variables) is a factor of $4$ more than computing the function
itself. 

To obtain a factor of $6$ in the existence claim in
Theorem~\ref{thm:main}, the construction uses a minor modification of
the 
algorithm presented, where the algorithm is modified to only run one (global)
reverse mode (as opposed to using a reverse mode computation
associated with each library function). Algorithm~\ref{alg:asd-with-library},
the one presented, has a runtime that is within a factor of $10$ of the cost of
computing just $f(x)$, due to that it runs the reverse mode when
calling every library function (which costs a factor of $4$ more than just
computing $g(x)$). This algorithm presented is what we actually suggest
to use, as the algorithm that achieves a factor of
$6$ may require more memory in order to build a larger computational
graph (and the factor of $10$ is rather pessimistic). To see the factor of $10$, the reverse mode 
(associated with each library function) gives an additional factor of $4$.
The directional derivative
costs one additional factor (this is referred to as the forward mode of AD~\citep{Griewank89onautomatic}). After all $T$ (overloaded) library
functions are completed, one additional reverse mode is called at the
end of Algorithm~\ref{alg:asd-with-library}, which incurs (at most) an additional factor of $4$.  

To obtain the factor of $6$, it is straightforward to modify the
algorithm, as the following proof shows.

\begin{proof}[Proof of Theorem~\ref{thm:main}]
First, observe that the algorithm correctly computes the function
value on some (limiting branch), so the time it takes to compute
$f(x)$ on this branch is some $T$ where $T \leq
\mathrm{Runtime}^*(f;x)$. Now the directional derivative computation can
be computed when doing the forward pass of the algorithm, which incurs
at most one additional factor of $T$ (this is also referred to as the
forward mode of AD).  Instead of computing the intermediate
derivatives $\frac{\partial x_k}{\partial x_j}$ (as in
Algorithm~\ref{alg:ASD}), we will unravel the entire computational
graph of $f(x)$, where we replace each function $g_k(\cdot)$, by
inserting the corresponding computational
graph of $g_k(\cdot)$ into a global computational graph. We then run the reverse mode
of AD on this global computational graph, giving only one additional factor
of $4$.
\end{proof}

\end{document}